%%%%%%%%%%%%%%%%%%%%%%%%%%%%%%%%%%%%%%%%%%%%%%%%%%%%%%%%%%%%%%%%%%%%%%%%%%%%%%%%
%2345678901234567890123456789012345678901234567890123456789012345678901234567890
%        1         2         3         4         5         6         7         8

\documentclass[letterpaper, 10 pt, conference]{ieeeconf}  % Comment this line out if you need a4paper

\IEEEoverridecommandlockouts                              % This command is only needed if 
                                                          % you want to use the \thanks command

\overrideIEEEmargins                                      % Needed to meet printer requirements.

%In case you encounter the following error:
%Error 1010 The PDF file may be corrupt (unable to open PDF file) OR
%Error 1000 An error occurred while parsing a contents stream. Unable to analyze the PDF file.
%This is a known problem with pdfLaTeX conversion filter. The file cannot be opened with acrobat reader
%Please use one of the alternatives below to circumvent this error by uncommenting one or the other
%\pdfobjcompresslevel=0
\pdfminorversion=4

% See the \addtolength command later in the file to balance the column lengths
% on the last page of the document

% The following packages can be found on http:\\www.ctan.org
\usepackage[utf8]{inputenc}
\usepackage{hyperref}
\usepackage{graphicx}
\usepackage{cite}
\usepackage{wrapfig}
\usepackage{array}
\usepackage{multirow}
\usepackage{amsmath, amscd, verbatim, amsfonts, amssymb}
\usepackage{graphics, graphicx, xcolor}
\usepackage{kotex}
\usepackage{algorithm}
\usepackage[noend]{algpseudocode}
\usepackage{caption}
\usepackage{subcaption}

\hypersetup{
    pdfborder = {0 0 0}
}

\newtheorem{Lemma}{\it Lemma}

\newtheorem{example}{\it Example}

\DeclareMathOperator*{\argmax}{argmax}
\DeclareMathOperator*{\argmin}{argmin}

\DeclareMathOperator{\diag}{\text{diag}}

\newcommand{\VEC}[1]{\text{vec}_h({#1})}
\newcommand{\invVEC}[1]{\text{vec}_h^{-1}({#1})}

\title{\LARGE \bf Distributed Kalman-filtering: Distributed optimization viewpoint}

\author{Kunhee Ryu and Juhoon Back${}^*$% <-this % stops a space
%\thanks{This work was not supported by any organization}% <-this % stops a space
\thanks{${}^*$Corresponding author}
\thanks{K. Ryu and J. Back are with School of Robotics, Kwangwoon University, Seoul, Republic of Korea {\tt\small \{ryuhhh, backhoon\}@kw.ac.kr}}
}

\begin{document}

\maketitle
\thispagestyle{empty}
\pagestyle{empty}

%%%%%%%%%%%%%%%%%%%%%%%%%%%%%%%%%%%%%%%%%%%%%%%%%%%%%%%%%%%%%%%%%%%%%%%%%%%%%%%%
\begin{abstract}
We consider the Kalman-filtering problem with multiple sensors which are connected through a communication network. If all measurements are delivered to one place called fusion center and processed together, we call the process centralized Kalman-filtering (CKF). When there is no fusion center, each sensor can also solve the problem by using local measurements and exchanging information with its neighboring sensors, which is called distributed Kalman-filtering (DKF). Noting that CKF problem is a maximum likelihood estimation problem, which is a quadratic optimization problem, we reformulate DKF problem as a consensus optimization problem, resulting in that DKF problem can be solved by many existing distributed optimization algorithms. A new DKF algorithm employing the distributed dual ascent method is provided and its performance is evaluated through numerical experiments.
\end{abstract}

%%%%%%%%%%%%%%%%%%%%%%%%%%%%%%%%%%%%%%%%%%%%%%%%%%%%%%%%%%%%%%%%%%%%%%%%%%%%%%%%
\section{Introduction}
It goes without saying that the Kalman-filter, an optimal state estimator for dynamic systems, has had a huge impact on various fields such as engineering, science, economics, etc. \cite{Welch1995, Bell1993TAC, Humpherys2010CSM, Thrun2005Book}. Basically, the filter predicts the expectation of the system state and its covariance based on the dynamic model and the statistical information on the model uncertainty or process noise, and then correct them using new measurement, sensor model, and the information on measurement noise. When multiple sensors possibly different types are available, we can just combine the sensor models to process the measurements altogether.

Thanks to the rapid development of sensor devices and communication technology, we are now able to monitor large scale systems or environments such as traffic network, plants, forest, sea, etc. In those systems, sensors are geometrically distributed, may have different types, and usually not synchronized. To process the measurements, the basic idea would be to deliver all the data to one place, usually called fusion center, and do the correction step as in the case of multiple sensors. This is called the centralized Kalman-filtering (CKF). As expected, CKF requires a powerful computing device to handle a large number of measurements and sensor models, is exposed to a single point of failure, and is difficult to scale up. In order to overcome these drawbacks, researchers developed the distributed Kalam-filtering (DKF) in which each sensor in the network solves the problem by using local measurements and communicating with its neighbors. Compared with CKF, DKF has advantageous in terms of the scalability, robustness to component loss, computational cost, and thus the literature on this topic is expanding rapidly \cite{Olfati2007CDC, Olfati2009CDC, Bai2011ACC, Carli2008SAC, Khan2008TSP, Kim2016CDC, Wu2016IFAC, WU2018Aut}. For more details on DKF, see the survey \cite{Mahmoud2013Survey} and references therein. 

Some relevant results are summarized as follows. In \cite{Olfati2007CDC}, the author proposed scalable distributed Kalman-Bucy filtering algorithms in which each node only communicates with its neighbors. An algorithm with average consensus filters using the internal models of signals being exchanged is proposed in \cite{Bai2011ACC}. It is noted that the algorithm works in a single-time scale. In the work \cite{Wu2016IFAC}, the authors proposed a continuous-time algorithm that makes each norm of all local error covariance matrices be bounded, thus overcomes a major drawback of \cite{Olfati2007CDC}. %If the error covariance matrix of an estimator diverges, the local estimate can also diverge, and this can critically affect the other estimators in the sensor network. 
In \cite{Kim2016CDC}, an algorithm with a high gain coupling term in the error covariance matrix  is introduced and it is shown that the local error covariance matrix approximately converges to that of the steady-state centralized Kalman-filter. An in-depth discussion on distributed Kalman-filtering problem has been provided in \cite{Battistelli2015TAC, Battistelli2016Aut}, and the algorithms that exchange the measurements themselves, or exchange certain signals instead of the measurements are proposed, respectively.
 
%Although each of the existing algorithms has own novel ideas and advantages, there is a lack of unified viewpoint for DKF problem. Researches on the distributed Kalman-filtering have tended to be progressed to overcome the drawbacks of previous studies. For instance, authors in \cite{Kim2016CDC} and \cite{Wu2016IFAC} added a consensus term to the error covariance matrix update rule to prevent the divergence of the matrix and make the matrix be that of the centralized Kalman-filtering. The aim of this paper is to provide a framework for distributed Kalman-filtering problem from the perspective of distributed optimization. 
Although each of the existing algorithms has own novel ideas and advantages, to the best of the authors' knowledge, we do not have a unified viewpoint for DKF problem. Motivated by this, it is the aim of this paper to provide a framework for the problem from the perspective of distributed optimization. 

We start by observing that the {\it{correction}} step of Kalman-filtering is basically an optimization problem \cite{Bell1993TAC, Humpherys2010CSM, Thrun2005Book}, and then formulate DKF problem as a consensus optimization problem, which provides a fresh look at the problem. This results in that DKF problem can be solved by many existing distributed optimization algorithms \cite{Boyd+2011FTML, Nedic+2009TAC, Nedic+2010TAC, Zhang2018CDC, Dorfler2017}, expecting various DKF algorithms to be  derived. As an instance, a new DKF algorithm employing the {\it{dual ascent method}} \cite{Dorfler2017}, one of the basic algorithms for distributed optimization problems, is provided in this paper.

This paper is organized as follows. In Section \ref{Sec:ProblemSetting}, we recall CKF problem from the optimization perspective, and connects DKF problem to a distributed optimization problem. %A discussion on the existing algorithm in the framework is also provided. 
A new DKF algorithm based on {\it{dual ascent method}} is proposed in Section \ref{Sec:DKF-DA},  and numerical experiments evaluating the proposed algorithm is conducted in Section \ref{Sec:NE}.
\medskip

%\subsection{Notation}
\noindent{\bf Notation}: 
For matrices $A_1$, \dots, $A_n$, $\diag(A_1,\dots,A_n)$ denotes the block diagonal matrix composed of $A_1$ to $A_n$. For scalars $a_1$,\dots, $a_n$, $[a_1;\dots;a_n] := [a_1^\top,\dots,a_n^\top]^\top$, and $[A_1;\dots;A_n]$ with matrices $A_i$'s is defined similarly.  $1_n \in \mathbb{R}^n$ denotes the vector whose components are all 1, and $I_n$ is the identity matrix whose dimension is $n \times n$. The maximum and minimum eigenvalue of a matrix $A$ are denoted by $\sigma_{\max}(A)$ and $\sigma_{\min}(A)$, respectively. For a random variable $x$, $x \sim \mathsf{N}(\mu, \sigma^2)$ denotes $x$ is normally distributed with the mean $\mu$ and the variance $\sigma^2$, and $\mathbb{E}\{ x\}$ denotes the {\it{expected value}} of a random variable $x$, {\em{i.e.,}} $\mathbb{E}\{ x\} = \mu$. The half vectorization of a symmetric matrix $M \in \mathbb{R}^{n \times n}$ is denoted by $\VEC{M} \in \mathbb{R}^{n(n+1)/2}$, whose elements are filled in Column-major order. $i.e., \VEC{M} := [M_{1,1}; \dots; M_{1,n}; M_{2,2}; \dots; M_{2,n}; \dots;$ $ M_{n-1,n-1};M_{n-1,n};M_{n,n}]$ where $M_{i,j}$ is $i,j$ element of $M$, and $\invVEC{\cdot}$ denotes the inverse function of $\VEC{\cdot}$, $i.e., \invVEC{\VEC{M}} = M$. For a function $f(x, y): \mathbb{R}^{n}\times \mathbb{R}^m \rightarrow \mathbb{R}$, $\nabla_{x} f(x,y)$ denotes the gradient vector $\frac{\partial f(x,y)}{\partial x} = [\frac{\partial f(x,y)}{\partial x_1}; \dots;\frac{\partial f(x,y)}{\partial x_n}]$.

%$\VEC{A}$ denotes the vectorization for a matrix $A$ and if $A$ is  
%$\invVEC{A}$ denotes the inverse transformation of $\VEC{A}$, {\em{i.e.,}} $\invVEC{\VEC{A}} = A$.

\medskip
%\subsection{Graph Theory} 
\noindent{\bf Graph theory}: For a network consisting of $N$ nodes, the communication among nodes is modeled by a graph $\mathcal{G}$. Let ${\mathcal{A}} = [a_{ij}] \in {\mathbb{R}}^{N \times N}$ be an adjacency matrix associated to ${\mathcal{G}}$ where $a_{ij}$ is a weight of an edge between nodes $i$ and $j$. If node $i$ communicates to node $j$ then, $a_{ij} > 0$, or if not $a_{ij} = 0$. Assume there is not self edge, {\em i.e.}, $a_{ii} = 0$. The Laplacian matrix associated to the graph $\mathcal{G}$, denoted by $L$ is a $N \times N$ matrix such that $l_{ij, i \neq j} = -a_{ij}$, and $l_{ii} = \sum_{j=1}^N a_{ij}$. ${\mathcal{N}}_i$ is a set of nodes communicating with   node $i$, {\em i.e.}, ${\mathcal{N}}_i = \{j | a_{ij}>0 \}$.

\section{Distributed Kalman-filtering and Its Connection to Consensus Optimization}\label{Sec:ProblemSetting}
In this section, we recall CKF problem in terms of optimization, which is the maximum likelihood estimation\cite{Bell1993TAC}, and establish a connection between DFK and distributed optimization.

Consider a discrete-time linear system with $N$ sensors described by
\begin{subequations}\label{eq:System}
\begin{align}
x_{k+1} &= Fx_k + w_k\\
y_k &= H x_k + v_k = \begin{bmatrix} H_1 \\ H_2 \\ \vdots \\ H_N \end{bmatrix} x_k + \begin{bmatrix} v_{1,k} \\ v_{2,k} \\ \vdots \\ v_{N,k} \end{bmatrix}
\end{align}
\end{subequations}
where $x_{k} \in {\mathbb{R}}^{n}$ is the state vector of the dynamic system, $y_k := [y_{1,k};\dots;y_{N,k}] \in \mathbb{R}^{m}$ is the output vector, and $y_{i,k} \in \mathbb{R}^{m_i}$ is the output associated to sensor $i$. $m_i$'s satisfy $\sum^N_{i=1} m_i = m$. $F$ is the system matrix and $H$ is the output matrix consisting of $H_i \in \mathbb{R}^{m_i \times n}$ which is the output matrix associated to sensor $i$. $w_k\in {\mathbb{R}}^{n}$ with $w_k \sim \mathsf{N}(0, Q)$  is the process noise, $v_{i,k} \sim \mathsf{N}(0, R_i)$ is the measurement noise on  sensor $i$, and $v_k :=[v_{1,k};\dots;v_{N,k}] \in {\mathbb{R}}^{m}$ with $v_k \sim \mathsf{N}(0, \diag(R_1, \dots, R_N))$. Assume that the pair $(F, H)$ is observable, and each $v_{i,k}$ is uncorrelated to $v_{{j}, k}$ for $j \neq i$.

\subsection{Centralized Kalman-filtering problem from the optimization perspective}
If all the measurements from $N$ sensors are collected and processed altogether, the problem can be seen as the one with a imaginary sensor that measures $y_k$ with complete knowledge on $H$, thus called centralized  Kalman-filtering.% which aggregates all $y_{i,k}$ with the complete knowledge of $H$.
The filtering consists of two steps, {\it{prediction}} and {\it{correction}}. In the prediction step, the predicted estimate $\hat{x}_{k|k-1}$ and error covariance matrix $P_{k|k-1}$ are obtained based on the previous estimate, error covariance matrix, and the system dynamics. The update rules are given by
\begin{align*}
\hat{x}_{k|k-1} &= F \hat{x}_{k-1}\\
P_{k|k-1} &= {\mathbb{E}}\{ e_{k|k-1}e_{k|k-1}^\top \}\nonumber\\
&= F {\mathbb{E}}\{ e_{k-1}e_{k-1}^\top \}F^{\top} + {\mathbb{E}}\{ w_k w_k^\top \} \nonumber\\
&= F P_{k-1} F^\top + Q
\end{align*}
where $\hat{x}_{k-1}$ and $P_{k-1}$ are estimate and error covariance matrix in previous time, respectively, and $e_{k|k-1} := x_k - \hat{x}_{k|k-1}$, $e_{k} := x_k - \hat{x}_{k}$. Assume that $P_k$ is initialized as a positive definite matrix ($P_0>0$, usually set as $Q$). 

In the correction step, the predicted estimate and the error covariance matrix are updated based on the current measurements containing the measurement noise. The correction step can be regarded as a process to find the optimal parameter (estimate) from the predicted estimate $\hat{x}_{k|k-1}$, error covariance $P_{k|k-1}$, and the observation $y_k$. In fact, it is known that this step is an optimization problem ({\it{maximum likelihood estimation, MLE}}\cite{Bell1993TAC}) and we recall the details below. 

Let ${z}_k = [y_k;\hat{x}_{k|k-1}] \in \mathbb{R}^{m+n}$ and $\bar{H}_c = [H;I_n] \in \mathbb{R}^{(m+n) \times n}$. Then, ${z}_k \sim \mathsf{N}  ({\bar{H}_c} x_k, S_k)$ where $S_k =\text{diag}\{R, P_{k|k-1}\}$. For the random variable ${z}_k$, the likelihood function is given by
\begin{align*}
\mathfrak{L}(\xi_c) = \frac{1}{\sqrt{(2\pi)^{(m+n)}|S_k|}} e^{-\frac{1}{2}({z}_k-{\bar{H}_c} \xi_c)^{\top} S_k^{-1}({z}_k- {\bar{H}_c}\xi_c)}
\end{align*}
where the right-hand side is nothing but the probability density function of $z_k$ with the free variable $\xi_c \in \mathbb{R}^n$. 

Now, the maximum likelihood estimate $\hat{x}_{k}$ is defined as
\begin{align*} 
\hat{x}_{k} := \argmax_{\xi_c} (\mathfrak{L}(\xi_c)).
\end{align*}
Since $\mathfrak{L}(\xi_c)$ is a monotonically decreasing function with respect to $f_{\text{c}}(\xi_c) := \frac{1}{2}(z_k-\bar{H}_c \xi_c)^{\top}S^{-1} (z_k-\bar{H}_c\xi_c)$, $\hat{x}_k$ can also be obtained by
\begin{align}\label{eq:CKFUpdate}
\begin{split} 
\hat{x}_{k} &= \argmin_{\xi_c} (f_c(\xi_c))\\
&= \hat{x}_{k|k-1} + K (y_k - H\hat{x}_{k|k-1})
\end{split}
\end{align}
where $K = (H^\top R^{-1} H + P^{-1}_{k|k-1})^{-1} H^\top R^{-1}$. With the {\it{matrix inversion lemma}}, the Kalman-gain $K$ can be written as $K = P_{k|k-1}H^\top(H P_{k|k-1} H^\top + R)^{-1}$, which appears in  the standard Kalman-filtering.

On the other hand, by the definition of $P_{k} := {\mathbb{E}} \{ (\hat{x}_k - x_k)(\hat{x}_k - x_k)^\top \}$, the update rule of the error covariance matrix $P_{k}$ of CKF is given by
\begin{align}\label{eq:ECovUpdateCKF}
P_k &= (\bar{H}_c^\top S^{-1} \bar{H}_c)^{-1} = (H^\top R^{-1} H + P^{-1}_{k|k-1})^{-1}\\
&= P_{k|k-1} - (H^\top R^{-1} H + P^{-1}_{k|k-1})^{-1} H^{\top} R^{-1} H P_{k|k-1}. \nonumber
\end{align}
For more details, see \cite{Thrun2005Book, Bell1993TAC, Humpherys2010CSM}.

\subsection{Derivation of distributed Kalman-filtering problem}\label{Sec:DerivationDKF}
Now, we consider a sensor network which consists of $N$ sensors and suppose that each sensor runs an  estimator without the fusion center. Each estimator in the network tries to find the optimal estimate by processing the local measurement and exchanging information with its neighbors through communication network. The communication network among estimators is modeled by a graph $\mathcal{G}$ and the Laplacian matrix associated with $\mathcal{G}$ is denoted by $L \in \mathbb{R}^{N \times N}$. Under the setting (\ref{eq:System}), estimator $i$ measures only the local measurement $y_{k,i}$, and the parameters $H_i$ and $R_i$ are kept private to estimator $i$. It is noted that the pair $(F, H_i)$ is not necessarily observable. We assume that the graph is connected and undirected {\em{i.e.,}} $L = L^\top$, and $F$ and $Q$ are open to all estimators.

Similar to CKF, DKF has two steps, {\it{local prediction}} and {\it{distributed correction}}. In the local prediction step, each estimator predicts
\begin{align*}
\hat{x}_{i, k|k-1} &= F \hat{x}_{i, k-1}\\
P_{i, k|k-1} &= F P_{i, k-1} F^\top + Q.
\end{align*}
where $\hat{x}_{i, k|k-1}$ and $P_{i, k|k-1} $ are local estimates of $\hat{x}_{k|k-1}$ and $P_{k|k-1} $, respectively, that estimator $i$ holds.

In the distributed correction step, each estimator solves the maximum likelihood estimation in a distributed manner. The objective function of CKF $f_c (\xi)$ can be rewritten as
\begin{align*}
\sum_{i=1}^N f_i(\xi_c) = \sum_{i=1}^N \frac{1}{2} (\bar{z}_{i,k} - \bar{H}_i \xi_c)^\top \bar{S}_{i,k}^{-1} (\bar{z}_{i,k} - \bar{H}_i \xi_c)
\end{align*}
where $\bar{z}_{i,k} = [y_{i,k};\hat{x}_{i, k|k-1}]$, $\bar{H}_i = [H_i; I_n]$, $\bar{S}_{i,k} = \diag(R_{i}, N P_{i, k|k-1})$. %, and $\hat{x}_{i, k|k-1}$ and $P_{i, k|k-1}$ are the locally predicted estimate and error covariance matrix, respectively. 
 We assume that $\hat{x}_{i, k|k-1}=\hat{x}_{k|k-1}$ and $P_{i, k|k-1}=P_{k|k-1}$. This makes sense when the each sensor reached a consensus on $ \hat{x}_{i, k-1}$ and $P_{i, k-1}$ in the previous correction step.

Assuming that each estimator holds its own optimization variable $\xi_i \in \mathbb{R}^{n}$ for $\xi_c$,  DKF problem is written as the following consensus optimization problem.
\begin{subequations}\label{eq:DKFP}
\begin{align}
\text{minimize}& \quad \sum_{i=1}^N f_i(\xi_i)\label{eq:DKFP_obj}\\
\text{subject to}& \quad \xi_1 = \cdots = \xi_N.\label{eq:DKFP_const}
\end{align}
\end{subequations}
If there exists a distributed algorithm that finds a minimizer $(\xi_1^*, \dots, \xi_N^*)$, we say that the algorithm solves  DKF problem.

Since the kernel of Laplacian $L$ is $\text{span} \{1_N\}$, the constraints (\ref{eq:DKFP_const}) can be written with as $(L \otimes I_n) \xi = 0$ where $\xi = [\xi_1;\dots;\xi_N]$. To proceed, we define the Lagrangian to solve the problem (\ref{eq:DKFP}) as 
\begin{align}\label{eq:LagrangianDKFP}
{\mathcal{L}}(\xi, \lambda) &= \sum_{i=1}^N f_i(\xi_i) + \lambda^\top \bar{L} \xi
\end{align}
where $\lambda \in \mathbb{R}^{Nn}$ is the Lagrange multipliers (dual variable) associated with (\ref{eq:DKFP_const}) and $\bar{L} = (L \otimes I_n)$. We decompose the Lagrangian into local ones defined by
\begin{align}\label{eq:LagrangianDKFPLocal}
{\mathcal{L}}_i(\xi_i, \lambda_i) &= f_i(\xi_i) + \lambda_i^\top \sum_{j \in {\mathcal{N}}_i} a_{ij} (\xi_i - \xi_j).
\end{align}

For the Lagrangian (\ref{eq:LagrangianDKFP}), the partial derivatives over $\xi$ and $\lambda$ are given by
\begin{align*}
\nabla_{\xi} \mathcal{L}(\xi, \lambda) &= -\bar{H}^{\top} \bar{S}_k^{-1}(\bar{z}_k - \bar{H} \xi) + \bar{L} \lambda \\
\nabla_{\lambda} \mathcal{L}(\xi, \lambda) &= \bar{L} \xi,
\end{align*}
where $\bar{z}_k = [\bar{z}_{1,k};\dots;\bar{z}_{N,k}]$, $\bar{H}=\diag(\bar{H}_1, \dots, \bar{H}_N)$ and $\bar{S}_k = \diag(\bar{S}_{1,k}, \dots, \bar{S}_{N,k})$.
Then, the optimality condition for ($\xi^*$, $\lambda^*$) becomes the following saddle point equation (KKT conditions), namely
\begin{align}\label{eq:SPE}
\begin{bmatrix} -\bar{H}^\top \bar{S}_k^{-1} \bar{H} & -\bar{L} \\ \bar{L} & {{0}} \end{bmatrix}
\begin{bmatrix} \xi^* \\ \lambda^* \end{bmatrix} = \begin{bmatrix} -\bar{H}^\top \bar{S}_k^{-1} \bar{z}_k\\ 0 \end{bmatrix}
\end{align}
where $\xi^* := [\xi_1^*; \dots; \xi_N^*]$ and $\lambda^* := [\lambda_1^*; \dots; \lambda_N^*]$.

\begin{Lemma}\label{Lemma:DKFConvergence}
The solutions to DKF problem are parameterized as $(\xi^*, \lambda^*)=((1_N\otimes I_n) \xi^\dagger, (1_N\otimes I_n) \tilde \lambda +  \bar \lambda)$ where $\xi^\dagger\in \mathbb R^n$ and $\bar \lambda \in \mathbb R^{Nn}$ are unique vectors and $\tilde \lambda\in \mathbb R^n$ is an arbitrary  vector. If $(\xi^*, \lambda^*)$ is an optimal solution to DKF problem, then $\xi_i^*$ is the optimal solution to CFK problem. \hfill $\diamond$
%The set of minimizers  $(\xi^*,\lambda^*)$ for the Lagrangian (\ref{eq:LagrangianDKFP}) of DKF problem also minimizes the objective function $f_c(\cdot)$ of CKF problem.
\end{Lemma}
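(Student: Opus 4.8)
The plan is to read the solution set directly off the saddle point system \eqref{eq:SPE}, treating it as a linear system and exploiting the symmetry of $\bar L$. First I would examine the second block row, which reads $\bar L \xi^* = 0$. Since the graph is connected and undirected, $\ker L = \text{span}\{1_N\}$, and because $\bar L = L \otimes I_n$ one has $\ker \bar L = \{(1_N \otimes I_n)v : v \in \mathbb{R}^n\}$. Hence $\bar L \xi^* = 0$ forces $\xi^* = (1_N \otimes I_n)\xi^\dagger$ for some $\xi^\dagger \in \mathbb{R}^n$; in other words the blocks $\xi_i^*$ must all coincide, which already establishes the claimed form of $\xi^*$ and shows that any optimal $\xi_i^* = \xi^\dagger$.

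Next I would substitute this form into the first block row to get a linear equation for $\lambda^*$, namely $\bar L \lambda^* = \bar H^\top \bar S_k^{-1}(\bar z_k - \bar H \xi^*) =: b(\xi^\dagger)$. Because $L = L^\top$, the matrix $\bar L$ is symmetric, so $\text{range}(\bar L) = (\ker \bar L)^\perp$ and this equation is solvable in $\lambda^*$ if and only if $b(\xi^\dagger)$ is orthogonal to $\ker \bar L$, i.e. $(1_N^\top \otimes I_n)\, b(\xi^\dagger) = 0$. Using the block-diagonal structure $\bar H = \diag(\bar H_1,\dots,\bar H_N)$ and $\bar S_k = \diag(\bar S_{1,k},\dots,\bar S_{N,k})$, a short computation gives $(1_N^\top \otimes I_n) b(\xi^\dagger) = \sum_{i=1}^N \bar H_i^\top \bar S_{i,k}^{-1}(\bar z_{i,k} - \bar H_i \xi^\dagger) = -\sum_{i=1}^N \nabla f_i(\xi^\dagger)$.

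The consistency condition is therefore exactly $\sum_i \nabla f_i(\xi^\dagger) = 0$. Invoking the identity $\sum_i f_i = f_c$ (which relies precisely on the scaling $\bar S_{i,k} = \diag(R_i, N P_{i,k|k-1})$), this reads $\nabla f_c(\xi^\dagger) = 0$. Since $f_c$ has Hessian $\bar H_c^\top S^{-1}\bar H_c = P_k^{-1} \succ 0$, it is strictly convex, so this stationarity equation has a unique root, which is by definition the CKF estimate $\hat x_k = \argmin_{\xi_c} f_c(\xi_c)$. This single observation does double duty: it shows $\xi^\dagger$ is uniquely determined and equals the CKF solution, proving the second claim since every block $\xi_i^* = \xi^\dagger$, and it certifies $b(\xi^\dagger) \in \text{range}(\bar L)$ so that the $\lambda^*$ equation is indeed consistent.

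Finally, with $\xi^\dagger$ (hence $b(\xi^\dagger)$) fixed, the solution set of $\bar L \lambda^* = b(\xi^\dagger)$ is the affine subspace (particular solution) $+\, \ker \bar L$. Taking $\bar\lambda := \bar L^\dagger b(\xi^\dagger) \in (\ker \bar L)^\perp$ as the unique particular solution lying in the range, and parameterizing $\ker \bar L$ as $(1_N \otimes I_n)\tilde\lambda$ with $\tilde\lambda \in \mathbb{R}^n$ arbitrary, yields $\lambda^* = (1_N \otimes I_n)\tilde\lambda + \bar\lambda$ with $\bar\lambda$ unique, as claimed. I expect the crux to be the Fredholm-alternative step together with recognizing that the consistency condition $(1_N^\top \otimes I_n)b(\xi^\dagger) = 0$ is nothing but the CKF stationarity condition; the remaining arguments are routine bookkeeping with the Kronecker and block-diagonal structure.
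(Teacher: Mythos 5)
Your proof is correct and follows essentially the same route as the paper's: primal feasibility forces $\xi^* = (1_N\otimes I_n)\xi^\dagger$, annihilating $\bar L$ with $1_N^\top\otimes I_n$ yields the CKF normal equation for $\xi^\dagger$, and $\lambda^*$ is a particular solution of $\bar L\lambda^* = b$ plus an element of $\ker\bar L = \{(1_N\otimes I_n)\tilde\lambda\}$. The only cosmetic differences are that you package the $1_N^\top\otimes I_n$ step as a Fredholm solvability condition (which has the small added benefit of explicitly certifying that a dual solution exists) and write the particular solution as $\bar L^\dagger b$, which is exactly the paper's $(\bar U\bar\Lambda^{-1}\bar U^\top\otimes I_n)b$.
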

\begin{proof}
By multiplying $1_N^\top \otimes I_n$ to the dual feasibility equation in (\ref{eq:SPE}), one can obtain 
\begin{align}\label{eq:proofLemma1}
(1_N^\top \otimes I_n) \bar{H}^\top \bar{S}_k^{-1} \bar{H} \xi^* = (1_N^\top \otimes I_n) \bar{H}^\top \bar{S}_k^{-1} \bar{z}_k.
\end{align}

The primal feasibility equation in (\ref{eq:SPE}) implies that $\xi^*=(1_N \otimes I_n) \xi^\dagger$, hence (\ref{eq:proofLemma1}) becomes
\begin{align*}
(1_N^\top \otimes I_n) \bar{H}^\top \bar{S}_k^{-1} \bar{H} (1_N \otimes I_n)  \xi^\dagger = (1_N^\top \otimes I_n) \bar{H}^\top \bar{S}_k^{-1} \bar{z}_k.
\end{align*}
From $\bar{H}^\top \bar{S}_k^{-1} \bar{H} = \diag(\frac{1}{N}P^{-1}_{k|k-1} + H_1^\top R_1^{-1} H_1, \dots,$ $\frac{1}{N}P^{-1}_{k|k-1} +  H_N^\top R_N^{-1} H_N)$, one has
\begin{align*}
\Big (P^{-1}_{k|k-1} &+ \sum_{i=1}^N H_i^{\top} R_i^{-1} H_i\Big) \xi^\dagger \\
&= P^{-1}_{k|k-1} \hat{x}_{k|k-1} + \sum_{i=1}^N H_i^\top R_i^{-1} y_{i,k}.
\end{align*}

Since  $\sum_{i=1}^N H_i^\top R_i^{-1} y_{i,k} = H^\top R^{-1} y_k$ and $\sum_{i=1}^N H_i^\top R_i^{-1} H_i = H^\top R^{-1} H$,  it follows that 
\begin{align}
\xi^\dagger = \hat{x}_{k|k-1} + K_k (y_k - H\hat{x}_{k|k-1})
\end{align}
where $K_k = (P^{-1}_{k|k-1} + H^\top R^{-1} H)^{-1} H^\top R^{-1}$ and by the matrix inversion lemma, we have $K_k = P_{k|k-1} H^\top (R + H P_{k|k-1} H^\top)^{-1}$. From the fact that the right-hand side of above equation is the same with the update rule (\ref{eq:CKFUpdate}) of CKF, it follows that $\xi_i^{*}=\xi^\dagger$ is the optimal estimate of CKF $\hat{x}_k$. 

On the other hand, one can observe that the optimal dual variable $\lambda^*$ is not unique since the dual feasibility equation 
\begin{align}\label{eq:lambda_star}
(L \otimes I_n) \lambda^* = \bar{H}^\top \bar{S}_k^{-1} (\bar{z}_k - \bar{H} (1_N \otimes I_n) \xi^\dagger)
\end{align}
is singular. To find $\eta^*$, consider the orthonormal matrix $U = [U_1 ~ \bar{U}]$ such that $LU = U \Lambda$ where $U_1 = \frac{1}{\sqrt{N}} 1_N$, $\bar{U}$ consists of the eigenvectors associated with the non-zero eigenvalues of $L$, denoted by $\sigma_2, $\dots$, \sigma_N$, and $\Lambda = \diag({0, \sigma_2, \dots, \sigma_N})$.  Left multiplying $U^\top\otimes I_n$ to the equation \eqref{eq:lambda_star} yields
\begin{align*}
\left( \begin{bmatrix}
0 & \\
 & \bar{\Lambda}
\end{bmatrix} \otimes I_n \right) \left( \begin{bmatrix} U_1^\top \\ \bar{U}^\top \end{bmatrix} \otimes I_n \right) \lambda^* = \left( \begin{bmatrix} U_1^\top \\ \bar{U}^\top \end{bmatrix} \otimes I_n \right) b
\end{align*}
where $\bar{\Lambda} = \diag(\sigma_2, \dots, \sigma_N)$ and $b = \bar{H}^\top \bar{S}_k^{-1} (\bar{z}_k - \bar{H} (1_N \otimes I_n) \xi^\dagger)$. Hence, the optimal dual variable $\lambda^*$ becomes
%\begin{align*}
%\left( \begin{bmatrix} U_1^\top \\ \bar{U}^\top \end{bmatrix} \otimes I_n \right) \lambda^* = \begin{bmatrix} \tilde{\lambda}^* \\
%(\bar{\Lambda}^{-1} \bar{U}^\top \otimes I_n)b
%\end{bmatrix}
%\end{align*}
$\lambda^* = (U\otimes I_n)\begin{bmatrix} \tilde{\lambda}^* ; 
(\bar{\Lambda}^{-1} \bar{U}^\top \otimes I_n)b
\end{bmatrix}$ where $\tilde{\lambda}^* \in \mathbb{R}^{n}$ is an arbitrary vector. This completes the proof.
%Therefore, it holds that
%$\lambda^* = (\textstyle{\frac{1}{\sqrt{N}}} 1_N \otimes I_n) \tilde{\lambda}^* + (\bar{U} \bar{\Lambda}^{-1} %\bar{U}^\top \otimes I_n)b$ which completes the proof.
\end{proof}
\subsection{Information form of DKF problem}
It is well known that the dual of the Kalman-filter is the {\it{Information filter}} which uses the {\it{canonical parameterization}} to represent the normal (Gaussian) distribution \cite{Thrun2005Book}. With the canonical parameterization, DKF problem (\ref{eq:DKFP}) can also be written in information form.

Let $\eta_i = (H_i^\top R^{-1}_i H_i + \frac{1}{N} \Omega_{i, k|k-1})\xi_i$, $\Omega_{i, k|k-1} = P_{i, k|k-1}^{-1}$ and $\tau_{i, k|k-1} = P_{i, k|k-1}^{-1}\hat{x}_{i, k|k-1}$ which are the local decision variable for the information vector of the estimator $i$, the locally predicted information matrix and information vector, respectively. With these transformations, we rewrite the problem (\ref{eq:DKFP}) as
\begin{subequations}\label{eq:DIFP}
\begin{align}
\text{minimize}& \quad \sum_{i=1}^N h_i(\eta_i)\label{eq:DIFP_obj}\\
\text{subject to}& \quad \eta_1 = \cdots = \eta_N\label{eq:DIFP_const}
\end{align}
\end{subequations}
where 
\begin{align*}
h_i(\eta_i) = \frac{1}{2} &\Big( \eta_i^\top \Phi_i^{-1} \eta_i - \eta_i^\top \Phi_i^{-1} (H_i^\top R_i^{-1} y_i + \frac{1}{N}\tau_{i, k|k-1}) \\
& \hspace{0.9cm} + y_i^\top R_i^{-1}y_i + \frac{1}{N}\tau_{i, k|k-1}^\top \Omega_{i, k|k-1}^{-1} \tau_{i, k|k-1} \Big)
\end{align*}
and $\Phi_i = H_i^\top R^{-1}_i H_i + \frac{1}{N} \Omega_{i, k|k-1}$. For the distributed problem (\ref{eq:DIFP}), the Lagrangian is given by
\begin{align*}
{\mathcal{L}}_{\eta}(\eta, \lambda) &= \sum_{i=1}^N h_i(\eta_i) + \nu^\top \bar{L} \eta
\end{align*}
where $\eta := [\eta_1;\dots;\eta_N]$ and $\nu$ is the Lagrange multipliers. The associated saddle point equation becomes
\begin{align*}
\begin{bmatrix} -(\bar{H}^\top \tilde{S}_k^{-1} \bar{H})^{-1} & -\bar{L} \\ \bar{L} & {0} \end{bmatrix}
\begin{bmatrix} \eta^* \\ \nu^* \end{bmatrix} = \begin{bmatrix} -\bar{H}^\top \tilde{S}_k^{-1} \tilde{z}_k \\ 0 \end{bmatrix}
\end{align*}
where $\tilde{z}_k = [\tilde{z}_{1,k};\dots;\tilde{z}_{N,k}]$, $\tilde{S}_k = \diag(\tilde{S}_{1,k}, \dots, \tilde{S}_{N,k})$, $\tilde{z}_{i,k} = [y_{i, k}; \tau_{i,k|k-1}]$ and $\tilde{S}_{i,k} = \diag(R_i, N\Omega_{i, k|k-1}^{-1})$.

\subsection{Interpretations of existing DKF algorithm from the optimization perspective} \label{Ssec:DiscussionOnOthers}
One of the recent DKF algorithms, {\it{Consensus on Information}} (CI) \cite{Battistelli2015TAC, Battistelli2016Aut}  can be interpreted in the provided framework. CI consists of three steps, {\it{prediction}}, {\it{local correction}}, and {\it{consensus}}. In the prediction step, each estimator predicts the estimate based on the system dynamics and previous estimate similar to the standard information filter algorithm. Each estimator also updates the estimate with local measurements and output matrix in the local correction step. After that, the estimators find the agreed estimate by averaging the local estimates in the consensus step.

In the provided framework, CI can be viewed as the algorithm which solves the problem \eqref{eq:DIFP} through the two steps, the local correction step and the consensus step. In the former step, each of estimators finds the local minimizer (estimate) of the local objective function $h_i(\cdot)$. Since the partial derivative of $h_i(\eta_i)$ becomes
\begin{align*}
\nabla_{\eta_i} h_i(\eta_i) = \Phi_i^{-1}\eta_i - \Phi_i^{-1} (H_i^\top R_i^{-1} y_i + \frac{1}{N}\tau_{i, k|k-1})
\end{align*}
and the local minimizer $\eta_i^*$ can be obtained by $\eta^*_i = H_i^\top R_i^{-1} y_i + \frac{1}{N}\tau_{i, k|k-1}$, which is the local update rule of CI\footnote{In the CI, the scalar $\frac{1}{N}$ is neglected \cite{Battistelli2015TAC}.}. The local minimizer, however, can be different among estimators, since it minimizes only the local objective function $h_i(\cdot)$, which violates the constraint (\ref{eq:DIFP_const}).

The consensus step of CI performs a role to find an agreed (average) value of the local estimates, using the doubly stochastic matrix, and the results of the consensus step satisfy the constraint (\ref{eq:DIFP_const}). The agreed estimate, however, may not be the global minimizer of (\ref{eq:DIFP}), which means that the consensus step cannot guarantee the convergence of the estimates to that of CKF.

\section{A Solution to DKF Problem}\label{Sec:DKF-DA}
One can observe that (\ref{eq:LagrangianDKFP}) is strictly convex, differentiable, and the local objective function $f_i(\cdot)$ is a quadratic function, hence {\it{strong duality}} holds. In addition, from the fact $\bar{H}^\top \bar{S}_k^{-1} \bar{H}$ is a nonsingular and block diagonal matrix, the optimal conditions (\ref{eq:SPE}) are already in a distributed form. This implies that the minimizer $\xi^*$ can be obtained in a distributed manner as long as $\lambda^*$ is given, {\em{i.e.,}} $\xi_i^* = (\bar{H}_i^\top \bar{S}_{i,k}^{-1} \bar{H}_i)^{-1} (\bar{H}_i^\top \bar{S}_{i,k}^{-1} \bar{z}_{i,k} - \sum_{j \in \mathcal{N}_i} a_{ij} (\lambda^*_{i} - \lambda^*_{j}))$.

Based on the above discussion, we see that one possible algorithm solving (\ref{eq:DKFP}), guaranteeing the asymptotic convergence to the global minimizer $\xi^*$, is the dual ascent method \cite{Boyd+2011FTML, Dorfler2017} which is given by 
\begin{subequations}\label{eq:UpdateDAGlobal}
\begin{align}
\xi_{l+1} &= (\bar{H}^\top \bar{S}_k^{-1} \bar{H})^{-1} (\bar{H}^{\top} \bar{S}_k^{-1} \bar{z}_k - \bar{L} \lambda_l) \label{eq:PrimalUpdateGlobal}\\
\lambda_{l+1} &= \lambda_l + \alpha_\lambda \bar{L} \xi_{l+1}\label{eq:DualUpdateGlobal}
\end{align}
\end{subequations}
where $\alpha_\lambda > 0$ is a step size. The update rule \eqref{eq:UpdateDAGlobal} can be written locally as 
\begin{subequations}\label{eq:DKF_DA}
\begin{align}
\xi_{i,l+1} &= \hat{x}_{i, k|k-1} + K_{i,k} (y_{i,k} - H_i \hat{x}_{i, k|k-1}) - \psi_{i,l}\\
\lambda_{i,l+1} &= \lambda_{i,l} + \alpha_\lambda \sum_{j \in \mathcal{N}_i} a_{ij} (\xi_{i,l+1} - \xi_{j,l+1}).
\end{align} 
\end{subequations}
where $K_{i,k} = (H_i^{\top} R_i^{{-1}} H_i + \frac{1}{N} P_{i, k|k-1}^{{-1}})^{-1} H_i^{\top} R_i^{-1}$, $\psi_{i,l} = (H_i^\top R_i^{-1} H_i + \frac{1}{N} P_{i, k|k-1}^{-1})^{-1} \sum_{j \in \mathcal{N}_i} a_{ij} (\lambda_{i,l} - \lambda_{j,l})$, and $l$ is the iteration index to find the minimizer. 

Regarding the convergence of the update rule \eqref{eq:DKF_DA}, we have the following result.
\begin{Lemma}\label{lem:convergence}
Assume that the network $\mathcal{G}$ is undirected and connected. Then, the sequence $\{ \xi_{i,l} \}$ generated by the dual ascent method (\ref{eq:DKF_DA}) converges to $\hat{x}_k$ of CKF problem (\ref{eq:CKFUpdate}), as $l$ goes to infinity, provided that the step size $\alpha_\lambda > 0$ is chosen such that 
\begin{equation}\label{eq:bound_step_size}
\alpha_\lambda < \frac{2}{\sigma_N^2 \max_{i} \{ \| (\bar{H}_i^\top S_{i,k}^{-1} \bar{H}_i)^{-1}\| \}}
\end{equation}
where $\sigma_N$ is the maximum eigenvalue of $L$. Moreover, the sequence $\{\lambda_{i,l} \}$ converges to a vector which is uniquely determined by the initial conditions of  $\lambda_i$'s.  \hfill $\diamond$
\end{Lemma}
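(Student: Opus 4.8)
The plan is to eliminate the primal variable, reduce the coupled iteration (\ref{eq:UpdateDAGlobal}) to a single affine recursion in the dual variable, and then analyze it by splitting the state space along $\ker(\bar{L})$. Writing $M := \bar{H}^\top\bar{S}_k^{-1}\bar{H}$, which is symmetric positive definite and block diagonal, and $b := \bar{H}^\top\bar{S}_k^{-1}\bar{z}_k$, substituting \eqref{eq:PrimalUpdateGlobal} into \eqref{eq:DualUpdateGlobal} gives
\begin{equation*}
\lambda_{l+1} = (I - \alpha_\lambda \bar{L}M^{-1}\bar{L})\lambda_l + \alpha_\lambda \bar{L}M^{-1}b.
\end{equation*}
Setting $T := \bar{L}M^{-1}\bar{L}$, this operator is symmetric positive semidefinite, and because $M^{-1}$ is positive definite and $\bar{L}$ symmetric, $\ker T = \ker\bar{L} = \text{range}(1_N\otimes I_n)=:\mathcal{K}$.

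First I would decompose $\lambda_l = \lambda_l^{\parallel} + \lambda_l^{\perp}$ with $\lambda_l^{\parallel}\in\mathcal{K}$ and $\lambda_l^{\perp}\in\mathcal{K}^{\perp}$. Since every dual increment $\alpha_\lambda\bar{L}\xi_{l+1}$ lies in $\text{range}(\bar{L})=(\ker\bar{L})^{\perp}=\mathcal{K}^{\perp}$, the parallel part is frozen, $\lambda_l^{\parallel}=\lambda_0^{\parallel}$ for all $l$, while $\bar{L}\lambda_l=\bar{L}\lambda_l^{\perp}$; hence the primal update depends only on $\lambda_l^{\perp}$ and the recursion restricts to $\mathcal{K}^{\perp}$ as $\lambda_{l+1}^{\perp}=(I-\alpha_\lambda T)\lambda_l^{\perp}+\alpha_\lambda\bar{L}M^{-1}b$, where the forcing term $\bar{L}M^{-1}b\in\text{range}(\bar{L})=\mathcal{K}^{\perp}$ and $\mathcal{K}^{\perp}=(\ker T)^{\perp}=\text{range}\,T$ is $T$-invariant.

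Next I would establish contraction on $\mathcal{K}^{\perp}$. On this subspace $T$ is symmetric positive definite, so its eigenvalues $\mu$ are positive and the eigenvalues $1-\alpha_\lambda\mu$ of $(I-\alpha_\lambda T)|_{\mathcal{K}^{\perp}}$ lie in $(-1,1)$ exactly when $\alpha_\lambda\mu<2$. It therefore suffices to bound the largest eigenvalue: using $\|\bar{L}\|=\|L\otimes I_n\|=\sigma_N$ and the block-diagonality of $M$, which yields $\|M^{-1}\|=\max_i\|(\bar{H}_i^\top\bar{S}_{i,k}^{-1}\bar{H}_i)^{-1}\|$, one has $\mu_{\max}\le\|\bar{L}\|^2\|M^{-1}\|=\sigma_N^2\max_i\|(\bar{H}_i^\top\bar{S}_{i,k}^{-1}\bar{H}_i)^{-1}\|$. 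Hence the step-size condition (\ref{eq:bound_step_size}) guarantees $\alpha_\lambda\mu_{\max}<2$, making the restricted map a contraction, so $\lambda_l^{\perp}$ converges to the unique fixed point $\lambda_\infty^{\perp}\in\mathcal{K}^{\perp}$ solving $T\lambda_\infty^{\perp}=\bar{L}M^{-1}b$, and consequently $\xi_l=M^{-1}(b-\bar{L}\lambda_{l-1}^{\perp})$ converges as well.

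Finally I would identify the limits. Passing to the limit in \eqref{eq:DualUpdateGlobal} forces $\bar{L}\xi_\infty=0$, so $\xi_\infty\in\ker\bar{L}=\mathcal{K}$, i.e.\ $\xi_\infty=(1_N\otimes I_n)\xi^\dagger$ satisfies the consensus constraint; together with $M\xi_\infty+\bar{L}\lambda_\infty=b$ this is precisely the saddle-point equation (\ref{eq:SPE}), so Lemma~\ref{Lemma:DKFConvergence} identifies $\xi^\dagger=\hat{x}_k$, giving $\xi_{i,l}\to\hat{x}_k$ for every $i$. Moreover $\lambda_l=\lambda_0^{\parallel}+\lambda_l^{\perp}\to\lambda_0^{\parallel}+\lambda_\infty^{\perp}$, whose value depends on the initialization only through the frozen component $\lambda_0^{\parallel}$, establishing the last claim. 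I expect the main obstacle to be the singularity of $\bar{L}$: the iteration matrix $I-\alpha_\lambda T$ has an eigenvalue $1$ on $\mathcal{K}$, so convergence cannot hold on the full space and the whole argument must be carried out on $\mathcal{K}^{\perp}$; getting this splitting right---verifying invariance, that the forcing term stays in $\mathcal{K}^{\perp}$, and that positive definiteness is recovered there---is the crux, whereas the eigenvalue bound is routine.
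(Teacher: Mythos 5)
Your proposal is correct and follows essentially the same route as the paper: both reduce to the dual recursion with iteration matrix $I-\alpha_\lambda \bar L (\bar H^\top \bar S_k^{-1}\bar H)^{-1}\bar L$, split the dual space along $\ker \bar L$ (the paper via the explicit eigenbasis $U=[U_1\ \bar U]$ of $L$, you via the abstract orthogonal decomposition), show the component along $1_N\otimes I_n$ is frozen while the complement contracts under the same operator-norm bound, and identify the limits through the saddle-point equation and Lemma~\ref{Lemma:DKFConvergence}. The only cosmetic difference is that you work with the affine recursion and its fixed point directly, whereas the paper works with the error $e^\lambda_l=\lambda_l-\lambda^*$ relative to an optimal dual variable obtained from Lemma~\ref{Lemma:DKFConvergence}.
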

\begin{proof}
Substituting the dual feasibility equation to the primal feasibility equation of (\ref{eq:SPE}) yields
\begin{align}\label{eq:DualOpt}
\bar{L} (\bar{H}^\top \bar{S}_k^{-1} \bar{H})^{-1} \bar{L} \lambda^* = \bar{L} (\bar{H}^\top \bar{S}_k^{-1} \bar{H})^{-1} \bar{H}^\top \bar{S}_k^{-1} \bar{z}_k.
\end{align} 
Now let $e^\lambda_{l} = \lambda_{l} - \lambda^*$. Then, one obtains 
\begin{align*}
e^\lambda_{l+1} &= \lambda_l + \alpha_\lambda \bar{L} \xi_{l+1} - \lambda^*\\
&= \lambda_l + \alpha_\lambda \bar{L}(\bar{H}^\top \bar{S}_k^{-1} \bar{H})^{-1}(\bar{H}^\top \bar{S}_k^{-1} \bar{z}_k - \bar{L}\lambda_l) - \lambda^*.
\end{align*}
From the identity (\ref{eq:DualOpt}), we have
\begin{align}\label{eq:DualErrorDynamics}
\begin{split}
e^\lambda_{l+1} &= (I - \alpha_\lambda \bar{L} (\bar{H}^\top \bar{S}_k^{-1} \bar{H})^{-1} \bar{L}) e^{\lambda}_l\\
&:= (I - \alpha_\lambda \tilde{A}_\lambda) e^{\lambda}_l.
\end{split}
\end{align}
Here, $\tilde{A}_\lambda$ is a symmetric positive semi-definite matrix which has $n$ simple zero eigenvalues, and it holds that
$I - \alpha_\lambda \sigma_{\max}(\tilde{A}_\lambda)I \leq I - \alpha_\lambda \tilde{A}_\lambda \leq I - \alpha_\lambda \sigma_{\min}(\tilde{A}_\lambda)I$. 
Since $\sigma_{\min}(\tilde{A}_\lambda)$ is zero, it follows that if $\alpha_\lambda > 0$ is chosen such that $\alpha_\lambda \sigma_{\max}(\tilde A_{\lambda}) < 2$, all eigenvalues of $I - \alpha_\lambda \tilde{A}_\lambda$, except $1$, are located inside the unit circle. The bound \eqref{eq:bound_step_size} ensures this.

Regarding the convergence of $\lambda_l$, we proceed as follows. With the orthonormal matrix $U$ used in {\it{Lemma}} \ref{Lemma:DKFConvergence}, $\tilde{A}_\lambda$ can be written as 
\begin{align*}
\tilde{A}_\lambda &= (U \Lambda U^\top \otimes I_n) (\bar{H}^\top \bar{S}_k^{-1} \bar{H})^{-1} (U \Lambda U \otimes I_n) \\
&= (U \otimes I_n) \diag(0_n, M_{\text{sub}}) (U^\top \otimes I_n)
\end{align*}
where ${M}_{\text{sub}} \in \mathbb{R}^{(N-1)n \times (N-1)n} $ is a submatrix with the first $n$ rows and first $n$ columns removed. In the new coordinates $\bar{e}^\lambda_l$, defined by $\bar{e}^\lambda_l=  (U^\top \otimes I_n )e^\lambda_l $, the error dynamics of the dual variable can be expressed as
%\begin{align*}
%e^\lambda_l = (U \otimes I_n ) \bar{e}^\lambda_l.
%\end{align*}
%Then, we have
%\begin{align*}
%( U \otimes I_n ) \bar{e}^\lambda_{l+1} &= ( I-\alpha_\lambda \tilde{A}_\lambda ) ( U \otimes I_n )  \bar{e}^\lambda_{l},
%\end{align*} and this yields 
\begin{align*}
\bar{e}^\lambda_{l+1} &= \diag(I, I - \alpha_\lambda {M}_{\text{sub}}) \bar{e}^\lambda_{l}.
\end{align*}
%where we used the or$U = \begin{bmatrix} U_1 & \bar{U} \end{bmatrix}$.
From this equation, we know that the first $n$ components of $\bar{e}^\lambda_{l}$, denoted by $\tilde{e}^\lambda_{l}$, remains the same for any $l$, {\em i.e.}, $\tilde e_l^\lambda = \tilde e_0^\lambda$, $\forall l\ge 0$, meaning that
%\begin{equation}\label{eq:constant_tilde_e_lambda}
$(U_1^\top \otimes I_n) e^\lambda_l =\tilde e_0^\lambda, \ \forall l\ge 0$, 
%\end{equation}
which means that $\tilde e_0^\lambda= (U_1^\top \otimes I_n) e^\lambda_0$.
Moreover, with  $\alpha_\lambda$ chosen as \eqref{eq:bound_step_size}, which guarantees that the matrix $I - \alpha_\lambda {M}_{\text{sub}}$ has all its eigenvalues except 1 inside the unit circle, we have
%\begin{align}\label{eq:ebarInfty}
$\lim_{l \rightarrow \infty} \bar{e}^\lambda_{l} = \begin{bmatrix} \tilde{e}^\lambda_{0}; 0 \end{bmatrix}$, 
%\end{align}
from which it follows that 
\begin{equation}\label{eq:Limit_e_lambda}
\lim_{l \rightarrow \infty} e^\lambda_l =(U\otimes I_n)  \begin{bmatrix} \tilde{e}^\lambda_{0}; 0 \end{bmatrix}= (U_1\otimes I_n)(U_1^\top \otimes I_n) e^\lambda_0.
\end{equation}
%\begin{align*}
%\lim_{l \rightarrow \infty} e^\lambda_l &= \frac{1}{N} (1_N \otimes I_n) \sum_{i=1}^N e^\lambda_{i, 0}
%\end{align*}
Recalling that $e^\lambda_l := \lambda_l - \lambda^*$, we have from \eqref{eq:Limit_e_lambda}
\begin{align*}
\lim_{l \rightarrow \infty} \lambda_l &= \lambda^* + (U_1 U_1^\top \otimes I_n) (\lambda_0 - \lambda^*).
\end{align*}
Applying $\lambda^* = (U_1 \otimes I_n) \tilde{\lambda}^* + (\bar U \bar{\Lambda}^{-1} \bar{U}^\top \otimes I_n)b$ (for $\tilde \lambda^*$ and $b$, see the proof of {\em Lemma} \ref{Lemma:DKFConvergence}), we have
\begin{align*}
\lim_{l \rightarrow \infty} \lambda_l &=(\bar U \bar{\Lambda}^{-1} \bar{U}^\top \otimes I_n)b + (1_N \otimes I_n) \text{avg}(\lambda_{i, 0})
\end{align*}
where $\text{avg}(\lambda_{i, 0}) = \frac{1}{N} \sum_{i=1}^N \lambda_{i, 0}$, and this completes the proof.
\end{proof}

Now, we derive an update rule of the error covariance matrix. With the information matrix $\Omega_k := P_k^{-1}$, the error covariance update rule (\ref{eq:ECovUpdateCKF}) can be written as
\begin{align*}
\Omega_k &= H^\top R^{-1} H + \Omega_{k|k-1}\\ 
&= \frac{1}{N} \sum^N_{i=1} (NH_i^{\top} R_i^{-1} H_i + \Omega_{k|k-1}).
\end{align*}
Define $\Omega_{i,k} := H^\top_i R^{-1}_i H_i + \frac{1}{N}\Omega_{i, k|k-1}$. Then, the updated information matrix of CKF can be obtained by solving the following distributed optimization problem
\begin{subequations}\label{eq:ECovProblem}
\begin{align}
\text{minimize}& \quad \sum_{i=1}^N (\zeta_i - \VEC{N \Omega_{i, k}} )^2 \label{eq:ECovObj}\\
\text{subject to}& \quad \zeta_1 = \cdots = \zeta_N\label{eq:DKFPECov_const}
\end{align}
\end{subequations}
where $\zeta_i \in \mathbb{R}^{n(n+1)/2}$ is the decision variable. Note that the minimizer $\zeta^* := [\zeta_i^{*};\dots;\zeta_N^{*}] \in \mathbb{R}^{Nn(n+1)/2}$ of the above optimization problem is nothing but the average of all $\text{vec}(N \Omega_{i, k})$, which corresponds to $\Omega_k$.

Define the Lagrangian for the problem (\ref{eq:ECovProblem}) as 
\begin{align}\label{eq:Lagrangian_ECov}
{\mathcal{L}}_{\Omega}(\zeta, \mu) &= \sum_{i=1}^N (\zeta_i - \VEC{N\Omega_{i,k}} )^2 + \mu^\top (L \otimes I) \zeta
\end{align}
where $\mu \in \mathbb{R}^{Nn(n+1)/2}$ is the dual variable. The saddle point equation for (\ref{eq:Lagrangian_ECov}) is given by
\begin{subequations}\label{eq:OptConditionECov}
\begin{align}
\begin{bmatrix}
-I & -L \otimes I\\
L \otimes I & 0
\end{bmatrix}
\begin{bmatrix}
\zeta^* \\ \mu^*
\end{bmatrix}
=
\begin{bmatrix}
-\bar{z}_{\Omega, k} \\ 0
\end{bmatrix}
\end{align}
\end{subequations}
where $\bar{z}_{\Omega, k} := [\VEC{N\Omega_{1, k}};\dots;\VEC{N \Omega_{N, k}}]$, and $\mu^*$ is the dual variable of the optimal point. From the similar arguments in the proof of {\em Lemma} \ref{Lemma:DKFConvergence}, we have
\begin{align*}
\zeta^* &= (1_{N} \otimes I) \frac{1}{N} \sum^N_{i=1} \VEC{N \Omega_{i, k}}\\
&= (1_{N} \otimes I) ( \VEC{H^\top R^{-1} H} + \frac{1}{N} \sum^N_{i=1} \VEC{\Omega_{i,k|k-1}}).
\end{align*}
This implies that the optimal solution $\zeta^*_i$ is the half vectorization of the average of the locally predicted information matrix corrected by the global information $H^\top R^{-1} H$.

Based on the above arguments, we propose a dual ascent type update rule for the error covariance matrix as
\begin{subequations}\label{eq:ECov_DA}
\begin{align}
\zeta_{i, l+1} &= \VEC{\Omega_{i, k}} - \sum_{j \in \mathcal{N}_i} a_{ij} (\mu_{i, l} - \mu_{j, l})\\
\mu_{i, l+1} &= \mu_{i, l} + \alpha_\mu \sum_{j \in \mathcal{N}_i} a_{ij} (\zeta_{i, l+1} - \zeta_{j, l+1}).
\end{align} 
\end{subequations}
where $\alpha_\mu$ is a step size such that $0 < \alpha_\mu < 2/\sigma_N^2$, which obtained by the similar arguments in the proof of {\em Lemma} \ref{lem:convergence}. 
Putting all pieces together, we propose a DKF algorithm  described in Algorithm \ref{Algo:DKF_DA}.

\begin{algorithm}
\caption{DA-DKF}\label{Algo:DKF_DA}
\begin{algorithmic}[1]
\State {\it{//Local prediction}}
\State \quad $\hat{x}_{i, k|k-1} = A_d \hat{x}_{i, k-1}$
\State \quad $P_{i, k|k-1} = A_d P_{i, k-1} A_d^\top + Q$, $\Omega_{i, k|k-1} = P^{-1}_{i, k|k-1}$
\State
\State {\it{//Distributed correction}} 
\State \quad $\lambda_{i, 0}, \mu_{i, 0} = 0$
\State \quad {\bf{while}} $l= 0,\dots, l^*-1$, {\bf{do}}
\State \quad \quad {\it{//Distributed estimate update}} (\ref{eq:DKF_DA})
\State \quad \quad $\xi_{i, l+1} = \hat{x}_{i, k|k-1} + K_{i, k} (y_{i, k} - H_i \hat{x}_{i, k|k-1}) - \psi_{i, l}$
\State \quad \quad $\lambda_{i, l+1} = \lambda_{i, l} + \alpha_\lambda \sum_{j \in \mathcal{N}_i} a_{ij} (\xi_{i, l+1} - \xi_{j, l+1})$
\State
\State \quad \quad {\it{//Distributed covariance matrix update}} (\ref{eq:ECov_DA})
\State \quad \quad $\zeta_{i, l+1} = \VEC{\Omega_{i, k}} - \sum_{j \in \mathcal{N}_i} a_{ij} (\mu_{i, l} - \mu_{j, l})$
\State \quad \quad $\mu_{i, l+1} = \mu_{i, l} + \alpha_\mu \sum_{j \in \mathcal{N}_i} a_{ij} (\zeta_{i, l+1} - \zeta_{j, l+1})$
\State \quad {\bf{end}}
\State \quad $\hat{x}_{i, k} = \xi_{i, l^*}$, $P_{i, k} = (\invVEC{\zeta_{i, l^*}})^{-1}$
\end{algorithmic}
\end{algorithm} 

In the structural point of view, the algorithm consists of {\it{local prediction}} step and {\it{distributed correction}} step as in CKF. In the local prediction step, each estimator locally predicts the estimate and the corresponding covariance matrix. In the distributed correction step, each estimator finds the optimal points for the state estimate  and its error covariance matrix, iteratively, by using the local measurement information and exchanging information with its neighbors. With sufficiently large $l^*$, locally updated $\xi_{i, l^*}$ and $P_{i, k}$ converge to those of CKF with tunable size of errors. 

%%%%%%%%%%%%%%%%%%%%%%%%%%%
%%%%%%%%%%%%%%%%%%%%%%%%%%%
\section{Numerical Experiments}\label{Sec:NE}
We have two examples for the developed theory. The first one is a simple academic example, while the second one is more practical one. 

\begin{example}\label{example}
Consider a system given by 
\begin{align*}
x_{k+1} &= \begin{bmatrix}  0.4& 0.9& 0& 0\\ -0.9& 0.4& 0& 0\\0& 0& 0.5& 0.8\\0& 0& -0.8& 0.5\\
		\end{bmatrix}x_k + w_k\\
y_k &= \begin{bmatrix}  1& 0& 0& 0\\ 1& 1& 0& 0\\0& 0& 1& 1\\0& 0& 1& 0\\
		\end{bmatrix} x_k + v_k
\end{align*} 
and $Q = 0.1$, $R = \diag(0.1, 0.2, 0.3, 0.1)$, and suppose that $4$ estimators are connected through a communication network whose Laplacian matrix is given by
\begin{align*}
L = \begin{bmatrix}
3& 0& -1& -2\\0&  2& -2&  0\\-1& -2&  4&  -1\\-2& 0& -1& 3
\end{bmatrix}.
\end{align*} 
The step sizes for the algorithm are chosen as   $\alpha_\lambda, \alpha_\mu = 0.01$.

Figure \ref{fig:ENorm_A1} shows that the average error norm defined by $\text{avg}(\| e_{i, k} \|) = \frac{1}{N} \sum_{i=1}^{N} \|\hat{x}_{i, k} - x_k\|$ decreases more rapidly as $l^*$ increases. Figure \ref{fig:PNorm_A1} also shows that as $l^*$ increases, the average error covariance norm defined by $\text{avg}(\| P_{i, k}\|) := \frac{1}{N} \sum_{i=1}^N \| P_{i, k} \|$ approaches $\| P_k \|$ which is  the norm of the error covariance  matrix of CKF. It is seen that, when $l^* = 50$, there is very little difference between $\text{avg}(\| P_{i, k}\|)$ and $\| P_k \|$ of CKF. \hfill $\square$
\end{example}

\begin{figure}[t]
\captionsetup[subfigure]{justification=centering}
    \begin{subfigure}[b]{0.24\textwidth}
        \includegraphics[scale=0.3]{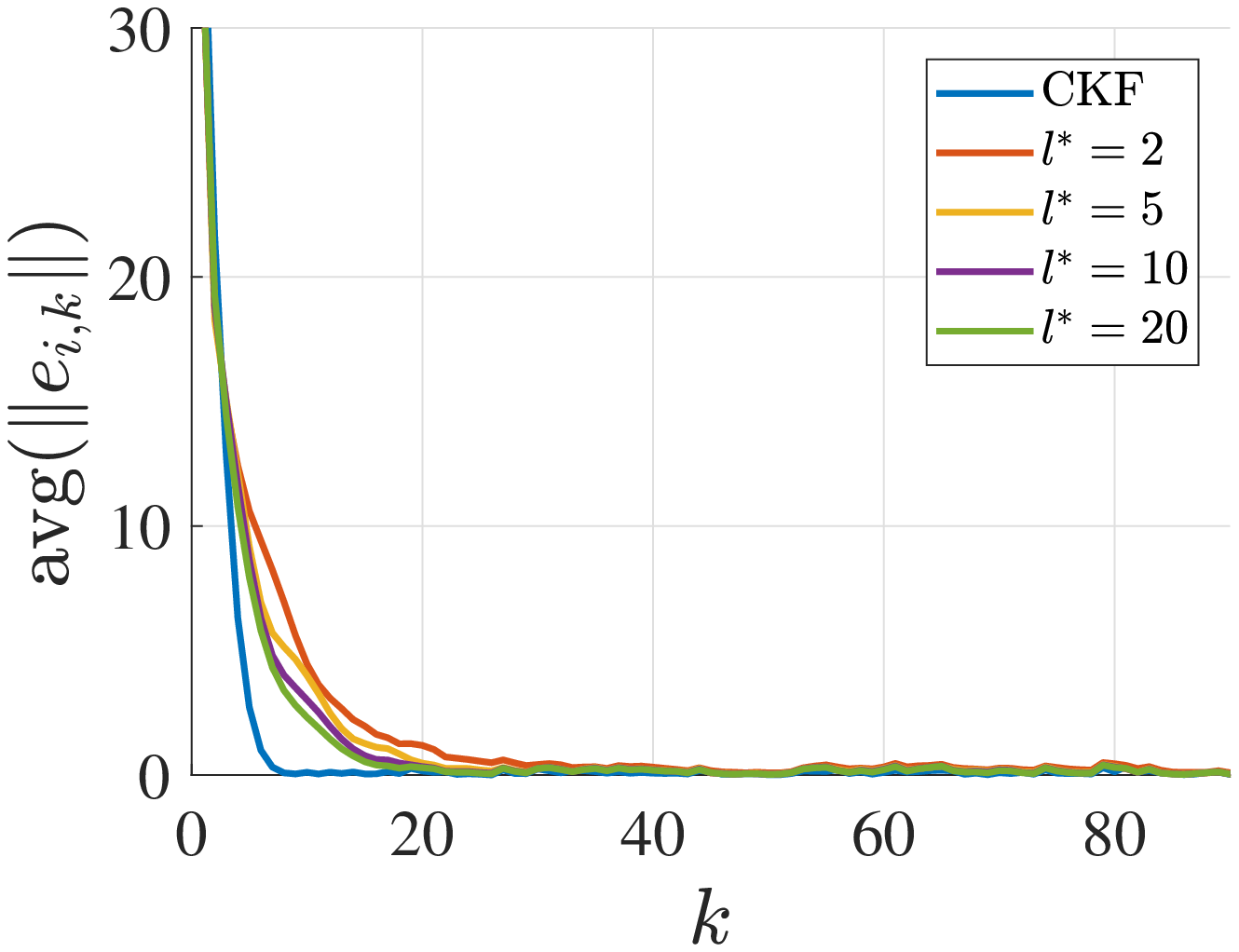}
        \caption{}
        \label{fig:ENorm_A1}
    \end{subfigure}
    %\hspace{2.3cm}
    \begin{subfigure}[b]{0.24\textwidth}
        \includegraphics[scale=0.3]{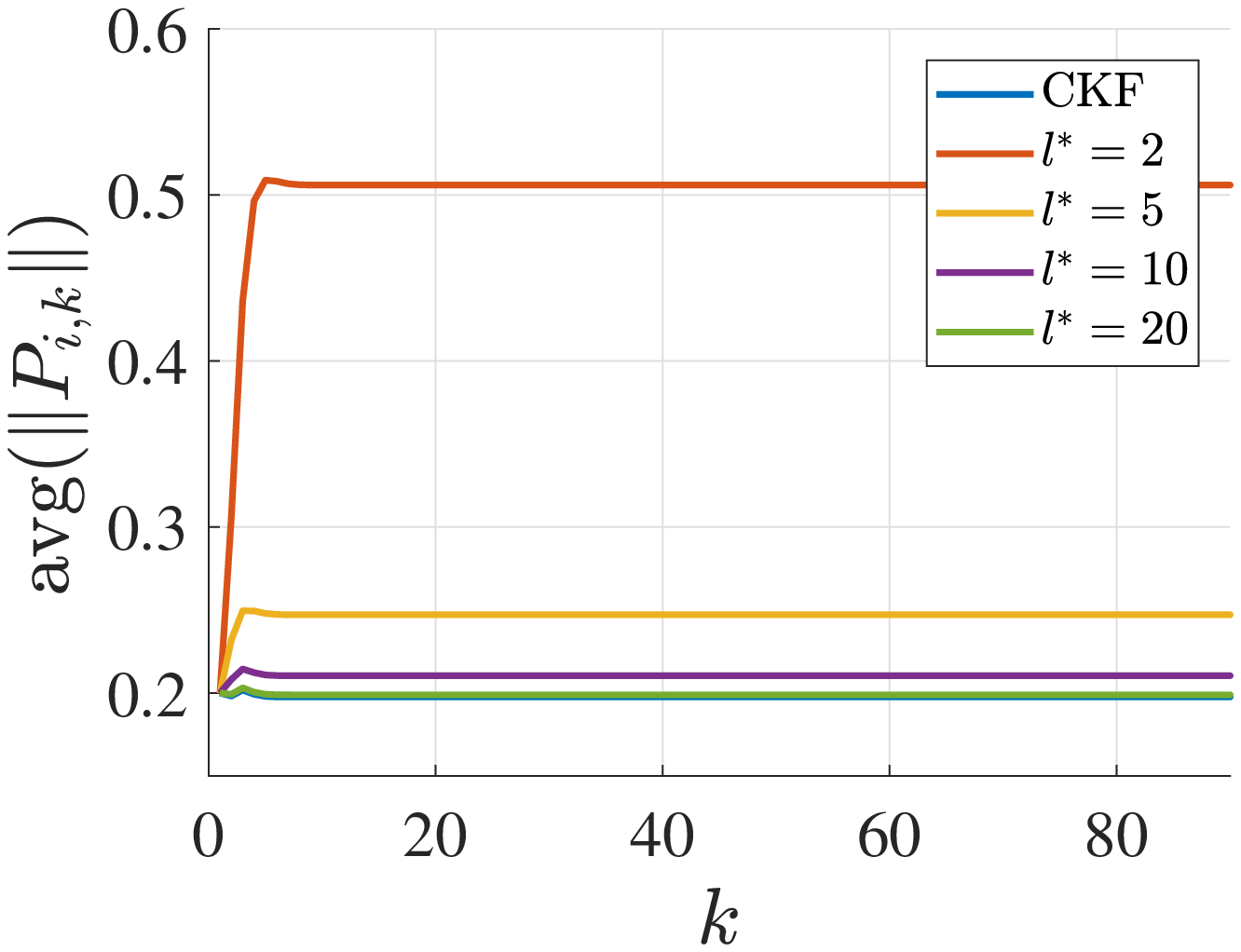}
        \caption{}
        \label{fig:PNorm_A1}
    \end{subfigure}
    \caption{The average of norm $\| e_{i, k} \|$ and the average of norms of local covariance matrix $\text{avg}(\| P_{i, k}\|)$ using DA-DKF.}
\end{figure}

\begin{example}
In this example, we evaluate DA-DKF with a network consisting of 50 estimators to estimate the state of a target system. The dynamics of the target system is described by
\begin{align*}
x_{k+1} = e^{A} x_k + w_k, \quad A = \begin{bmatrix} 0 & 0.5 & 0 & 0\\-0.5 & 0 & 0 & 0\\0 & 0 & 0 & -0.5\\0 & 0 & 0.5 & 0 \end{bmatrix}
\end{align*}
where $w_k \sim \mathsf{N}(0, Q)$ and $Q = 0.1$. The first and the third components of $x_k$ represent the $x$-axis position and $y$-axis position in the plane, respectively . 

The estimator $i$ knows $e^A$, $Q$, $H_i \in \mathbb{R}^{1 \times 4}$ and $R_i>0$, and each $H_i$ and $R_i$ is randomly chosen. %Note that the target system is unobservable for any $H_i \in \mathbb{R}^{1 \times 4}$, since the geometric multiplicity of $e^A$ is less than the algebraic multiplicity. 
The connections   among estimators are also randomly selected and the weight is $1$ when connected, and the maximum eigenvalue of $L$ is $18.5$. For all $i$, $P_{i,0} = Q$ and each component of the initial estimate $\hat{x}_{i, 0}$ is randomly chosen within $(-15, 15)$ as shown in Figure \ref{fig:k=0}. 
The parameters for DA-DKF were chosen as $\alpha, \beta = 10^{-5}$, $l^* = 10$. Figure \ref{fig:50DKF} shows four snapshots of the target system's position (black cross) and the each  estimator's estimate (red circles). The blue line is the trajectory of the target system. As time goes by (as $k$ increases), the estimates of the distributed Kalman-filters converge to the vicinity of the position of the target system. \hfill $\square$
\end{example}

\begin{figure}[t]
    \begin{subfigure}[t]{0.237\textwidth}
        \includegraphics[scale=0.31]{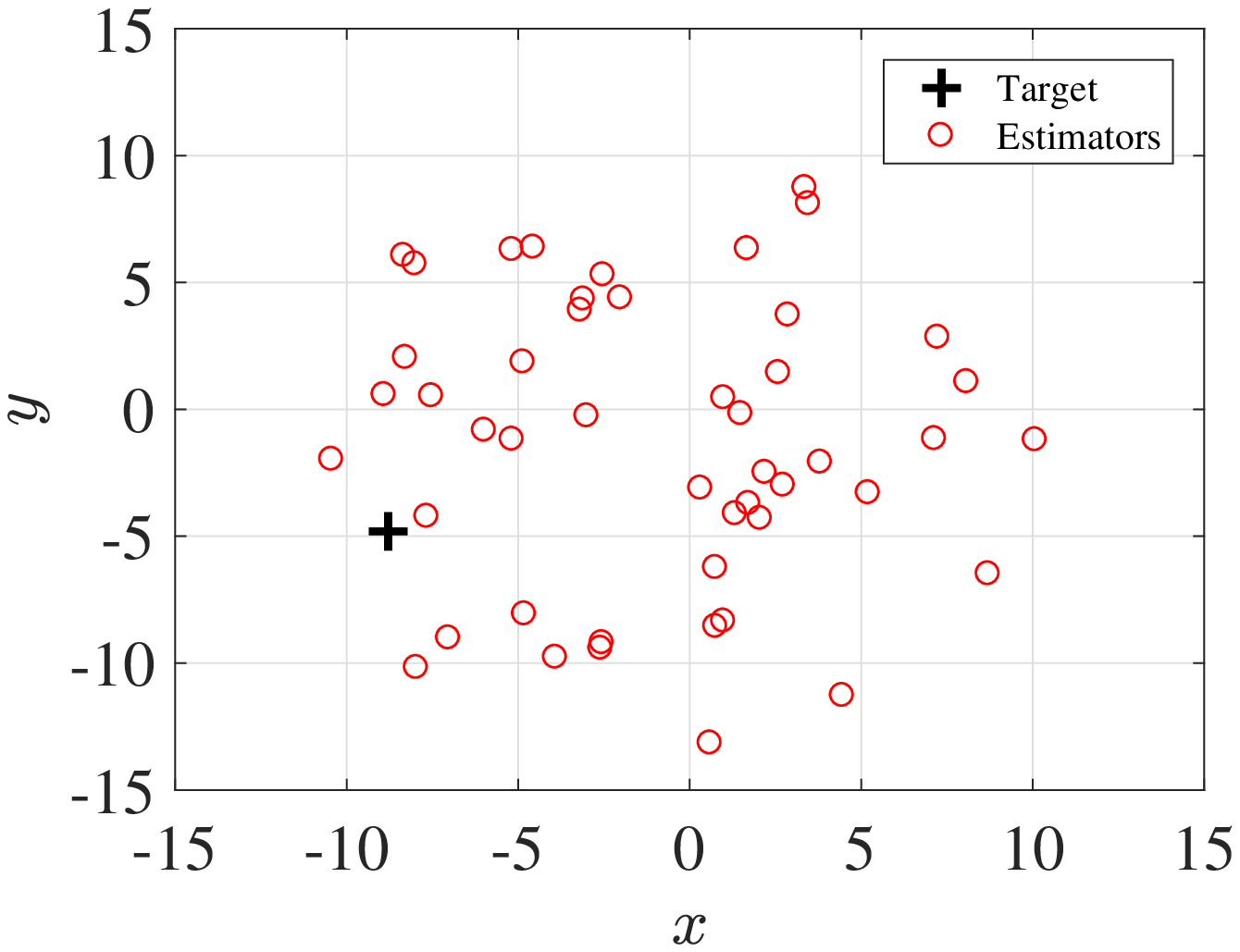}
		\caption{$k=0$}
        \label{fig:k=0}
    \end{subfigure}
    \begin{subfigure}[t]{0.237\textwidth}
        \includegraphics[scale=0.31]{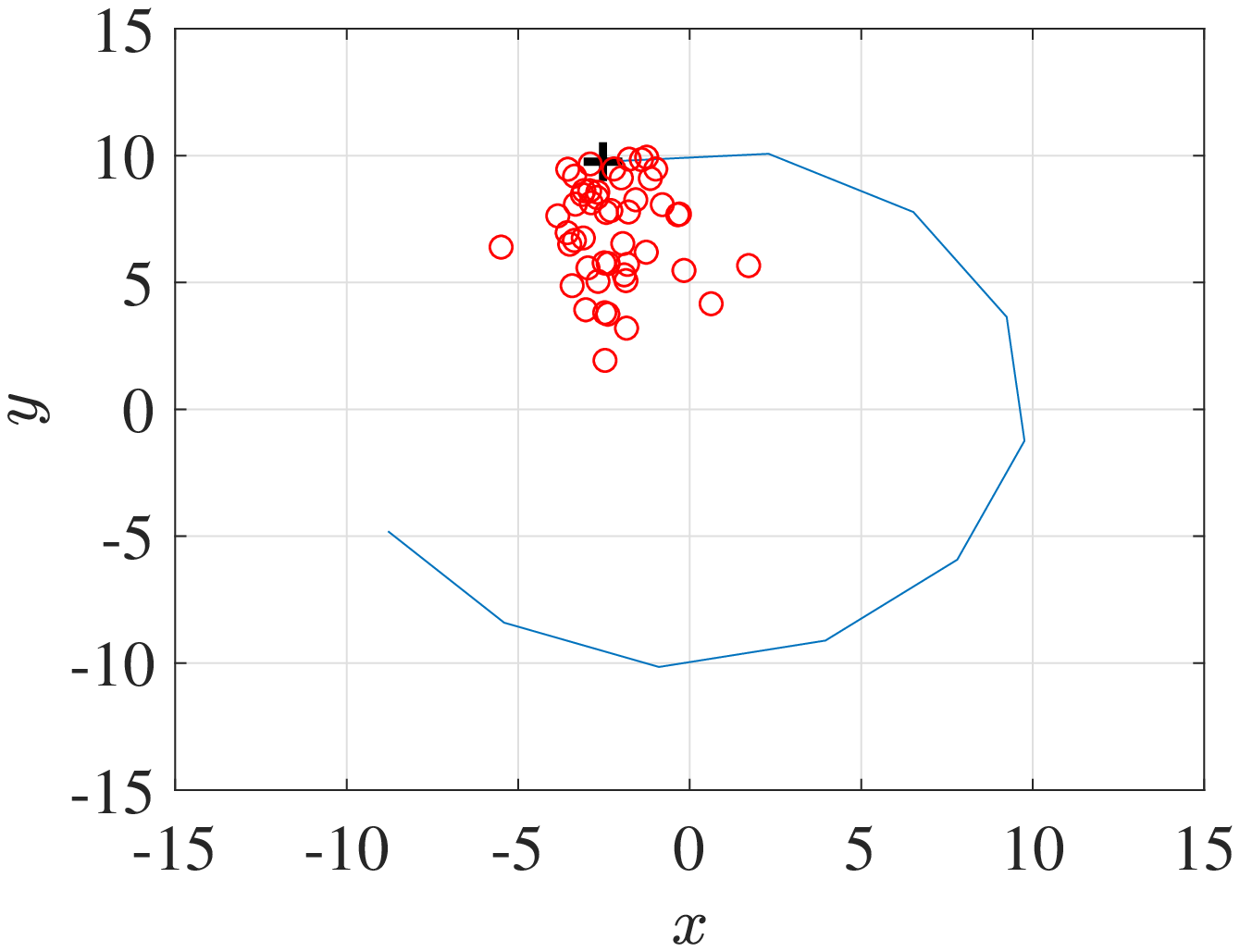}
        \caption{$k=10$}
        \label{fig:k=10}
    \end{subfigure}
    \begin{subfigure}[t]{0.238\textwidth}
        \includegraphics[scale=0.31]{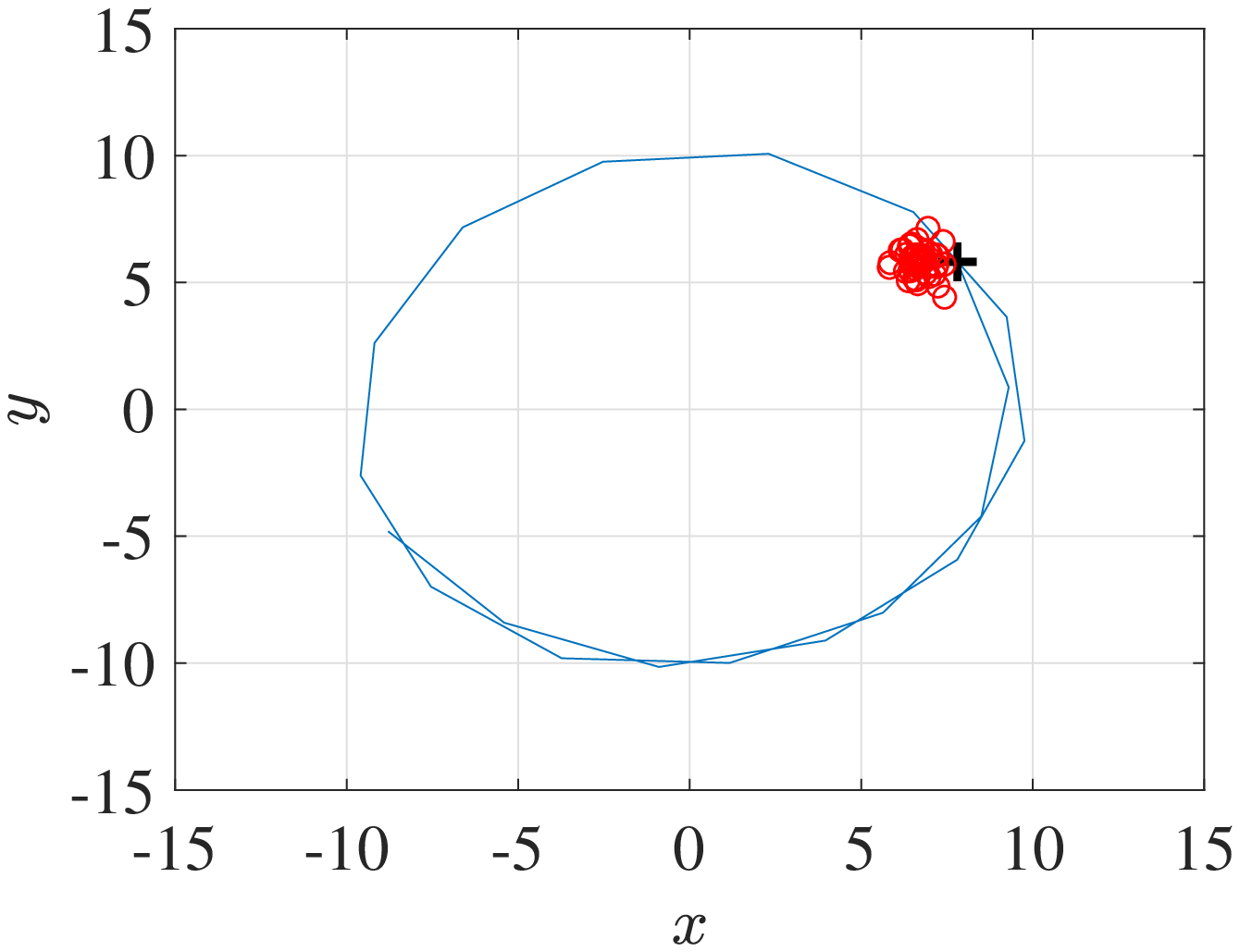}
        \centering \caption{$k=20$}
        \label{fig:k=20}
    \end{subfigure}
    \begin{subfigure}[t]{0.238\textwidth}
        \includegraphics[scale=0.31]{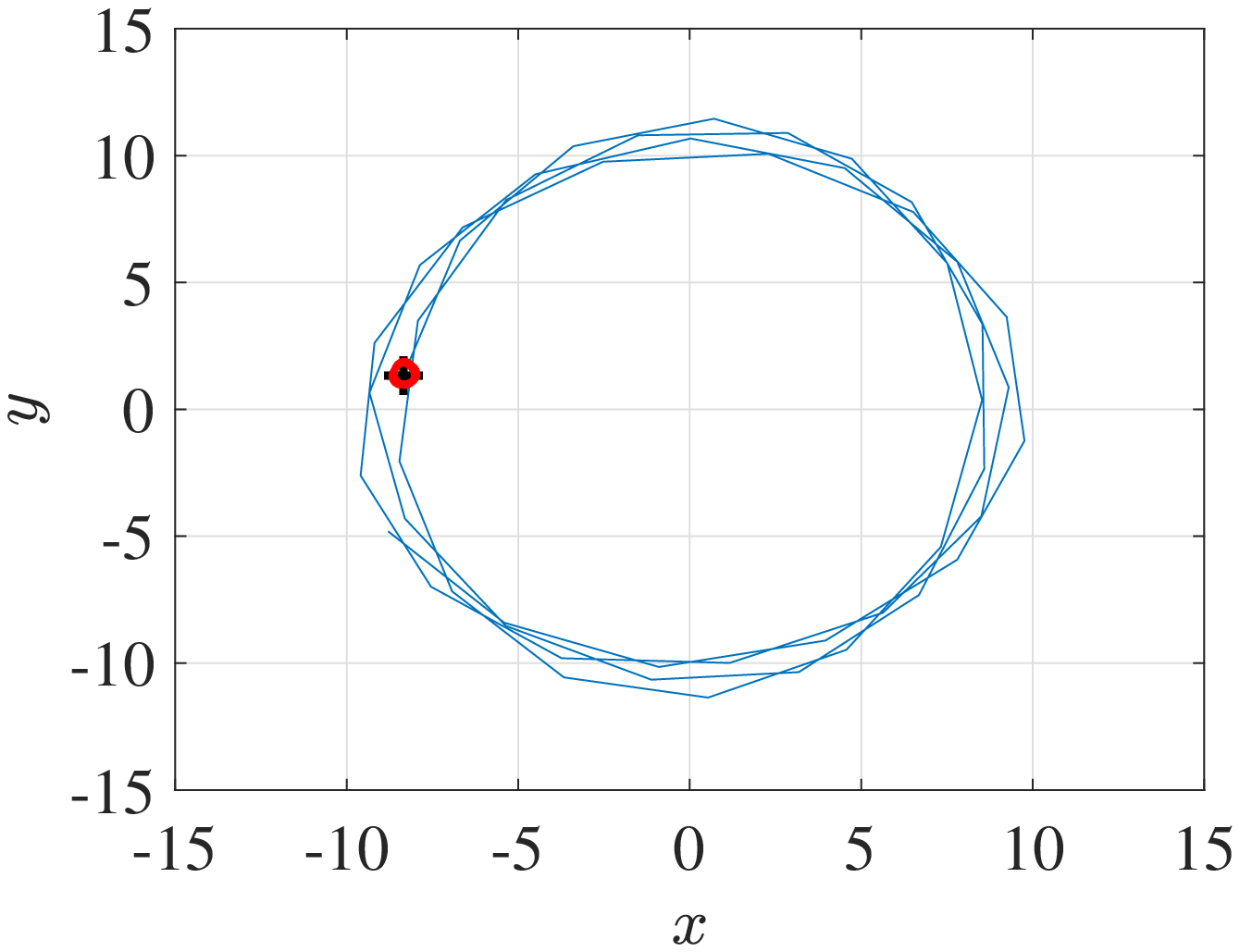}
 		\centering \caption{$k=50$}
        \label{fig:k=50}
    \end{subfigure}
    \caption{A sensor network with $50$ distributed Kalman-filters tracking a moving target using DA-DKF.}
\label{fig:50DKF}
\end{figure}

\section{Conclusions and Future Work}
This paper dealt with DKF from the optimization perspective. By observing that the correction step of Kalman-filtering is basically an optimization problem, we formulated DKF problem from the centralized one. The formulated problem is a quadratic consensus optimization problem. One of the recent DKF algorithms, Consensus on Information \cite{Battistelli2015TAC} was reinterpreted from the distributed optimization perspective. In addition, various DKF algorithms can be derived, by employing many existing distributed optimization methods to DKF problem. As an instance, DA-DKF has been presented, employing the distributed dual ascent method, and the algorithm has been validated with numerical experiments.

For the future work, we plan to analyze the effect of the residuals of the previous iteration $k$, especially how the residuals affect the convergence. In addition, researches on more practical obstacles, such as considering the time-varying network topology, reducing communication loads, will be conducted.

\bibliographystyle{IEEEtran}
\bibliography{mybib}

%\begin{thebibliography}{99}

%\bibitem{[Bell+1993TAC]} B. M. Bell and F. W. Cathey, "The iterated Kalman filter update as a Gauss-Newton method," in IEEE Transactions on Automatic Control, vol. 38, no. 2, pp. 294-297, Feb. 1993.

%\bibitem{[Welch+1995]} G. Welch and G. Bishop, "An introduction to the Kalman filter", 1995.
%\end{thebibliography}
\end{document}